\theoremstyle{plain}
\theoremstyle{plain}
\newtheorem{thm}{Theorem}
  \theoremstyle{plain}
  \newtheorem{lem}[thm]{Lemma}
  \theoremstyle{plain}
  \newtheorem{cor}[thm]{Corollary}
\title{On a conjecture of  Parker about Dessins d'Enfants}
\author{Corneliu Hoffman Department of Mathematics and Statistics 
\\ Bowling Green State
University \\ Bowling Green, OH 43403-1874}
\begin{document}
\title{On a conjecture of  Parker about Dessins d'Enfants} \author{Corneliu Hoffman \\ School of Mathematics  \\  University of Birmingham\\Edgbaston B15 2NN} \maketitle
\begin{abstract}
This note is concerned with the disproof of the most general case
of Parker's conjecture as stated in \cite{R04,LocSch06}. The conjecture
relates a certain group theoretic objects to the field of moduli of
a Dessin.
\end{abstract}
In his famous memoir \textquotedbl{}Esquisse d'un programme\textquotedbl{}
( a translation of which can be find in \cite{G97}), Alexandre Grothendieck
proposed the study of the absolute Galois group $\mbox{Gal}(\bar{\mathbb{Q}}/\mathbb{Q})$
via its faithful action on a collection of combinatorial objects that
he called \textit{dessins d'enfants} or children drawings.

The idea is based on a theorem of Belyi (see\cite{B79} or \cite{S94})
which states that an a complex algebraic curve $X$ can be defined
over $\bar{\mathbb{Q}}$ if and only if it admits a map $\beta:X\to\mathbb{P}^{1}$,
defined over $\bar{\mathbb{Q}}$ and ramified only over the points
$0,1,\infty$. Such a map is called a Belyi map and the pair $(X,\beta)$
is called a Belyi pair. Of course the group $\mbox{Gal}(\bar{\mathbb{Q}}/\mathbb{Q})$
has a natural action of Belyi pairs.

Given a Belyi pair one can construct a bipartite graph on the surface
$X$ lifting the segment $0,1$ from $\mathbb{P}^{1}$. This is the
dessin that corresponds to the Belyi pair. The two notions are completely
equivalent more precisely there is a natural (but nontrivial) way
to reconstruct the pair from the dessin. Therefore the group $\mbox{Gal}(\bar{\mathbb{Q}}/\mathbb{Q})$
acts on the collection or dessins, that is those bipartite graphs
on surfaces with the property that the complement of the dessin is
a union of simply connected cells (corresponding to the points above
$\infty$).

Finally given a Belyi pair/dessin, one can construct the action of
the fundamental group of $\mathbb{P}^{1}-\{0,1,\infty\}$ on a generic
fiber. This associates a permutation group called the monodromy group
to the dessin. More precisely the fundamental group $\pi_{1}(\mathbb{P}^{1}-\{0,1,\infty\},p)=\langle a,b,c|abc=1\rangle$
where $a,b,c$ are standard generators of the fundamental group (loops
around $0,1$ and $\infty$). The corresponding pair of permutations
given by the images of $a$ and $b$ define the dessin up to isomorphism.
Therefore you get an action of $\mbox{Gal}(\bar{\mathbb{Q}}/\mathbb{Q})$
on this collection of pairs of permutations.

We refer to the survey \cite{S94} for details on the various constructions.

One of the central questions in the area is finding \textquotedbl{}good\textquotedbl{}
invariants of a dessin. That is, finding invariants that will differentiate
between dessins that are not Galois conjugate.

One interesting invariant is the \textit{field of moduli}. Given a
dessin $D$ consider the stabiliser of $D$ in $\mbox{Gal}(\bar{\mathbb{Q}}/\mathbb{Q})$,
that is the group $\Gamma_{D}:=\{g\in\mbox{Gal}(\bar{\mathbb{Q}}/\mathbb{Q})|D^{g}=D\}$.
Note that the dessin is not just the curve but rather the curve $X$
but the Belyi cover and the monodromy group. The group $\Gamma_{D}$
is then the stabiliser of the triple $(X,\beta,G)$ where The field
of moduli of $D$ is the field $\mbox{Fix}(\Gamma_{D})$.

A field $K$ is called a field of definition for $D$ if there exists
a Belyi pair for $D$ so that $X$, $\beta$ and the monodromy group
are defined over $K$. Equivalently the field of moduli of a dessin
is the intersection of all its fields of definitions. The field of
moduli does not need to be a field of definition. It is quite hard
in practice to compute the field of moduli of various dessins and
Parker's conjecture proposes an alternative method.

As before a dessin $D$ can be completely described by a pair of permutations.
If the dessin $D$ is given by two permutations $a,b$ and $G$ is
the monodromy group then consider the element $x=\sum_{g\in G}(g^{-1}ag,g^{-1}bg)\in\mathbb{Q\mathbb{}}[G\times G]$.
R. Parker conjectured in 1984 that the field of moduli of $D$ is
generated over $\mathbb{Q}$ by the eigenvalues of $x$ in its action
on $\mathbb{Q}[G\times G]$. This was listed as one of the remarkable
open problems in the field in the field (see\cite{S94,R04,LocSch06}). 

In an unpublished note, L Schneps proved the conjecture for the case
of genus zero dessins. In each of those cases however, the field of
moduli was abelian. The aim of this note is to show that this needs
to be the case, see Corollary \ref{cor:Parker's-conjecture}and the
following weaker version of the conjecture.

\paragraph{The group theory}

Consider a group $G$ and two elements $a,b\in G$. Moreover consider
the element $x=\sum_{g\in G}(g^{-1}ag,g^{-1}bg)\in\mathbb{Q}[G\times G]$
in its left action on $\mathbb{Q}[G\times G]$. We will extend the
field of constants to $\mathbb{C}$ and work inside $\mathbb{C}[G]$
respectively $\mathbb{C}[G\times G]$. The following is rather obvious. 
\begin{lem}
The element $x$ commutes with the diagonal copy of $\mathbb{C}[G]$
in $\mathbb{C}[G\times G]$. Note that if we regard $\mathbb{C}[G\times G]$
as a $\mathbb{C}[G]$-module it is isomorphic to $\mathbb{C}[G]\otimes\mathbb{C}[G]$.
If $U,V$ are irreducible representations of $G$ then $U\otimes V$
is a submodule of \textup{$\mathbb{C}[G]\otimes\mathbb{C}[G]$}
In particular if we denote by $\lambda$ the trivial representation
of $G$ then both $\lambda\otimes V$ and $V\otimes\lambda$ are irreducible
$\mathbb{C}[G]$-submodules of $\mathbb{C}[G]\otimes\mathbb{C}[G]$
and they are isomorphic to $V$. \end{lem}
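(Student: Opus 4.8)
The plan is to verify the four assertions separately: the commutation claim by a direct reindexing computation, and the three module-theoretic claims by invoking the decomposition of the regular representation. Since the diagonal copy of $\mathbb{C}[G]$ is spanned by the group elements $(h,h)$, $h\in G$, and commutation is a linear condition, I would first reduce the commutation claim to checking that $x$ commutes with each $(h,h)$.

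For that reduced statement I would conjugate $x$ by $(h,h)$ and reindex. Explicitly,
$$(h,h)\,x\,(h^{-1},h^{-1}) = \sum_{g\in G}\bigl((gh^{-1})^{-1}a(gh^{-1}),\,(gh^{-1})^{-1}b(gh^{-1})\bigr),$$
and substituting $g'=gh^{-1}$, which is a bijection of $G$, leaves the sum unchanged. Hence $(h,h)x=x(h,h)$ for every $h$, and the first claim follows by linearity. This reindexing is the only genuine computation in the proof, and its sole subtlety is spotting the substitution $g\mapsto gh^{-1}$; so I expect no real obstacle here, consistent with the lemma being labelled rather obvious.

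For the module identification I would use the standard algebra isomorphism $\mathbb{C}[G\times G]\cong\mathbb{C}[G]\otimes\mathbb{C}[G]$ sending $(g_1,g_2)\mapsto g_1\otimes g_2$. Under this map the diagonal subalgebra corresponds to the elements $h\otimes h$, so the action of the diagonal $\mathbb{C}[G]$ becomes the diagonal action $h\cdot(u\otimes v)=hu\otimes hv$. I would then decompose the regular representation, $\mathbb{C}[G]\cong\bigoplus_W W^{\oplus\dim W}$ over the irreducibles $W$, so that each irreducible $U$ (respectively $V$) embeds as a left submodule of $\mathbb{C}[G]$. Consequently $U\otimes V$ sits inside $\mathbb{C}[G]\otimes\mathbb{C}[G]$ as a subspace stable under the diagonal action, hence as a $\mathbb{C}[G]$-submodule carrying the tensor representation $h\mapsto U(h)\otimes V(h)$.

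Finally, for the statement about the trivial representation $\lambda$, I would take $U=\lambda$, which is spanned inside $\mathbb{C}[G]$ by $\sum_{g\in G}g$. On $\lambda\otimes V$ the diagonal action reduces to $h\mapsto 1\otimes V(h)=V(h)$, so $\lambda\otimes V$ is isomorphic to $V$ and is therefore irreducible; the argument for $V\otimes\lambda$ is symmetric. All of this is standard once the diagonal module structure has been pinned down, so I anticipate the write-up to be short.
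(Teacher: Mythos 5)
Your proof is correct, and since the paper offers no proof at all for this lemma (it is dismissed as ``rather obvious''), your write-up simply supplies the standard details the author omits: the reindexing $g\mapsto gh^{-1}$ for the commutation, and the identification of $\lambda\otimes V$ with $V$ via the diagonal action. There is nothing to compare against and no gap to report.
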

\begin{cor}
The element $x$ permutes the irreducible $\mathbb{C}[G]$-submodules
of $\mathbb{C}[G\times G]$. In particular the eigenspaces of $x$
are $\mathbb{C}[G]$-submodules of $\mathbb{C}[G\times G]$. \end{cor}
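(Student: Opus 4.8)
The plan is to reinterpret $x$ as a module endomorphism and then read off both assertions from elementary representation theory. First I would make precise the $\mathbb{C}[G]$-module structure in play: $\mathbb{C}[G\times G]$ is viewed as a left $\mathbb{C}[G]$-module via the diagonal embedding $g\mapsto(g,g)$, so that the action of $g$ is left multiplication by $(g,g)$. The element $x$ also acts by left multiplication, $L_x\colon m\mapsto xm$. The Lemma says exactly that $x$ commutes with every $(g,g)$; hence $L_x\bigl((g,g)m\bigr)=x(g,g)m=(g,g)xm=(g,g)L_x(m)$, so $L_x$ is a $\mathbb{C}[G]$-module endomorphism of $\mathbb{C}[G\times G]$. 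This single observation is the engine for both statements.

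For the first assertion I would argue that a module endomorphism carries submodules to submodules: if $M$ is an irreducible $\mathbb{C}[G]$-submodule then $L_x(M)$ is again a submodule, and being a homomorphic image of the irreducible $M$ it is either $0$ or irreducible (and isomorphic to $M$ by Schur's lemma). To upgrade ``sends irreducibles to irreducibles'' to a genuine permutation I would pass to the isotypic decomposition $\mathbb{C}[G\times G]\cong\bigoplus_W W^{\oplus m_W}$: since $L_x$ commutes with $G$, Schur's lemma forces it to preserve each isotypic component, and on a component $W^{\oplus m}\cong W\otimes\mathbb{C}^{m}$ it has the form $\mathrm{id}_W\otimes A$ for some $A\in\mathrm{End}(\mathbb{C}^{m})$. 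The irreducible copies of $W$ inside the component then correspond to lines in $\mathbb{C}^{m}$, and $L_x$ acts on them through the transformation induced by $A$, which is the asserted permutation.

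The ``in particular'' clause is then immediate and is really the point needed downstream. For any scalar $\lambda$ the operator $L_x-\lambda\,\mathrm{id}$ again commutes with the diagonal $G$-action, so its kernel, the $\lambda$-eigenspace of $x$, is $G$-invariant and hence a $\mathbb{C}[G]$-submodule of $\mathbb{C}[G\times G]$; the same applies verbatim to generalized eigenspaces $\ker(L_x-\lambda\,\mathrm{id})^{k}$.

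I expect the only real delicacy to be the word ``permutes.'' The statement that eigenspaces are submodules is forced the moment one knows $L_x$ is a $G$-endomorphism. Turning the naive ``irreducibles map to irreducibles'' into an honest bijection requires the isotypic/Schur analysis above and some care about invertibility: if $A$ is singular on some component then certain irreducible submodules are sent to $0$, so the cleanest reading is that $x$ permutes the irreducible submodules on which it is invertible (equivalently, those lying in eigenspaces for nonzero eigenvalues). Since only the eigenspace statement is used in the application to the field of moduli, I would phrase the permutation claim at the level of isotypic components and not belabor the degenerate directions.
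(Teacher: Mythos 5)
Your proof is correct and takes essentially the route the paper intends: the paper offers no proof of this corollary at all, treating it as immediate from the preceding lemma that $x$ commutes with the diagonal copy of $\mathbb{C}[G]$, and your observation that $L_x$ is therefore a $\mathbb{C}[G]$-module endomorphism whose (generalized) eigenspaces are submodules is exactly the intended content. Your caveat that ``permutes'' strictly fails when $L_x$ annihilates an irreducible submodule is a legitimate point the paper glosses over, but since only the eigenspace statement is used downstream, this does not affect the rest of the argument.
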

\begin{lem}
If $U,V$ are irreducible representations of $G$ then $U\otimes V$
is a submodule of $\mathbb{C}[G]\otimes\mathbb{C}[G]$ that is invariant
under $x$. If $W$ is an irreducible $\mathbb{C}[G]$-submodules
of $\mathbb{C}[G\times G]$ that is invariant under $x$ then $W$
is included in an eigenspace of $x$.\end{lem}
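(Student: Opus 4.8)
The plan is to reduce both assertions to the Wedderburn block decomposition of the group algebra and then read them off. By Maschke's and Wedderburn's theorems, $\mathbb{C}[G]=\bigoplus_{U}\mathrm{End}(U)$, the sum running over the irreducible representations $U$ of $G$, with $\mathrm{End}(U)$ the minimal two-sided ideal attached to $U$. Tensoring, $\mathbb{C}[G\times G]=\mathbb{C}[G]\otimes\mathbb{C}[G]=\bigoplus_{U,V}\mathrm{End}(U)\otimes\mathrm{End}(V)$, and each summand $\mathrm{End}(U)\otimes\mathrm{End}(V)\cong\mathrm{End}(U\otimes V)$ is a minimal two-sided ideal of $\mathbb{C}[G\times G]$. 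Writing $\mathrm{End}(U\otimes V)=(U\otimes V)\otimes(U\otimes V)^{*}$, the image of the diagonal element $(g,g)$ is $\rho_{U}(g)\otimes\rho_{V}(g)$, so the diagonal copy of $\mathbb{C}[G]$ acts on this block through the internal tensor representation on the first (target) factor, while $(U\otimes V)^{*}$ is an inert multiplicity space. This is precisely the decomposition $\bigoplus_{U,V}(U\otimes V)^{\oplus(\dim U)(\dim V)}$ that produces the submodules of the previous lemma, each copy of $U\otimes V$ being a line $\xi\in(U\otimes V)^{*}$ in the multiplicity factor.

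For the first assertion I would compute the component of $x$ in each block. Projecting, $x=\sum_{U,V}X_{U,V}$ with
$$X_{U,V}=\sum_{g\in G}\rho_{U}(g)^{-1}\rho_{U}(a)\rho_{U}(g)\otimes\rho_{V}(g)^{-1}\rho_{V}(b)\rho_{V}(g)\in\mathrm{End}(U\otimes V).$$
Because each block is a two-sided ideal, left multiplication by $x$ preserves it and acts there as left multiplication by $X_{U,V}$, that is, as $X_{U,V}\otimes\mathrm{id}$ on $(U\otimes V)\otimes(U\otimes V)^{*}$. The key point is that this operator is the identity on the multiplicity factor, hence preserves every copy $(U\otimes V)\otimes\xi$; each such copy is one of the $G$-submodules isomorphic to $U\otimes V$ supplied by the previous lemma, and it is visibly invariant under $x$. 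This is the content that goes beyond the Corollary: commuting with the diagonal $\mathbb{C}[G]$ only forces $x$ to respect isotypic components, whereas the block structure shows it respects the individual copies of $U\otimes V$.

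The second assertion is then immediate from Schur's lemma. If $W$ is an irreducible $\mathbb{C}[G]$-submodule with $xW\subseteq W$, then $x|_{W}$ is an endomorphism of $W$ commuting with the diagonal $\mathbb{C}[G]$-action by the first lemma; since $W$ is irreducible, Schur's lemma gives $x|_{W}=\mu\,\mathrm{id}_{W}$ for some $\mu\in\mathbb{C}$, whence $W\subseteq\ker(x-\mu)$, an eigenspace of $x$.

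The only genuinely delicate step is the first assertion, and within it the bookkeeping that identifies left multiplication by $x$ on a block with $X_{U,V}\otimes\mathrm{id}$, so that the multiplicity directions are untouched. I would be careful to place the diagonal $G$-action on the correct tensor factor of $\mathrm{End}(U\otimes V)$, so that it is the columns, and not the rows, that are the $x$-invariant copies of $U\otimes V$; everything else in the argument is formal.
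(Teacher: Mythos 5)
Your proof is correct, and for the second assertion it is exactly the paper's argument: $x|_W$ commutes with the diagonal $\mathbb{C}[G]$-action, so Schur's lemma forces it to be a scalar $\mu$ and $W\subseteq\ker(x-\mu)$. For the first assertion the paper offers only ``an easy verification,'' and your Wedderburn-block computation --- identifying left multiplication by $x$ on the ideal $\mathrm{End}(U\otimes V)\cong(U\otimes V)\otimes(U\otimes V)^{*}$ with $X_{U,V}\otimes\mathrm{id}$, so that each column $(U\otimes V)\otimes\xi$ is an $x$-invariant copy of $U\otimes V$ --- is a correct and appropriately careful way to supply the missing details (the essential point being simply that left multiplication by any element of $\mathbb{C}[G\times G]$ preserves every left ideal, hence every $G\times G$-submodule of the regular representation).
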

\begin{proof}
The first statement is an easy verification. To show the second assertion
one notes that $x\in\mbox{End}_{\mathbb{C}[G]}(V)\cong\mathbb{C}$
from Schur's lemma. \end{proof}
\begin{cor}
If $V$ is an irreducible representation of $G$ then $x$ leaves
both $\lambda\otimes V$ and $V\otimes\lambda$ invariant. Moreover
the two modules will in fact be eigenspaces for $x$ and the respective
eigenvalues will be rational multiples of the character values of
$b$ respectively $a$ on the module $V$. \end{cor}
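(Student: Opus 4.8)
The plan is to split the statement into a qualitative part (invariance and scalar action) and a quantitative part (the value of the scalar), dispatching the former at once from the preceding results. Since $\lambda$ is the trivial representation, both $\lambda\otimes V$ and $V\otimes\lambda$ are isomorphic as $\mathbb{C}[G]$-modules to $V$, hence irreducible. The previous Lemma therefore guarantees that they are submodules invariant under $x$, and that $x$ restricted to each lies in $\mathrm{End}_{\mathbb{C}[G]}(V)\cong\mathbb{C}$, i.e.\ acts as a scalar; in particular each of the two modules is contained in a single eigenspace of $x$. It remains only to identify the two scalars.

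For this I would realise $\lambda$ concretely as the line spanned by $e_\lambda=\sum_{h\in G}h\in\mathbb{C}[G]$, which is fixed by left multiplication by every element of $G$. Picking any nonzero $v$ in the copy of $V$ inside the second tensor factor and using that left multiplication in $\mathbb{C}[G\times G]=\mathbb{C}[G]\otimes\mathbb{C}[G]$ is factorwise, the computation is
\[
x\cdot(e_\lambda\otimes v)=\sum_{g\in G}(g^{-1}ag)e_\lambda\otimes(g^{-1}bg)v=e_\lambda\otimes\Big(\sum_{g\in G}g^{-1}bg\Big)v ,
\]
where the first factor collapses to $e_\lambda$ precisely because $e_\lambda$ is left-invariant. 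Hence the scalar by which $x$ acts on $\lambda\otimes V$ equals the scalar by which $\sum_{g}g^{-1}bg$ acts on $V$.

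The key observation is that $\sum_{g\in G}g^{-1}bg$ is central in $\mathbb{C}[G]$: the map $g\mapsto g^{-1}bg$ is $|C_G(b)|$-to-one onto the conjugacy class $C_b$ of $b$, so this element is $|C_G(b)|$ times the class sum $K_b$. The standard central-character formula says $K_b$ acts on the irreducible module $V$ by $|C_b|\chi_V(b)/\dim V$, and since $|C_G(b)|\,|C_b|=|G|$ we obtain the eigenvalue $\tfrac{|G|}{\dim V}\,\chi_V(b)$ on $\lambda\otimes V$. A verbatim symmetric argument, now exploiting left-invariance of $e_\lambda$ in the second factor and the $a$-part of $x$ in the first, gives $\tfrac{|G|}{\dim V}\,\chi_V(a)$ on $V\otimes\lambda$. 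Both scalars are rational multiples of the character values $\chi_V(b)$ and $\chi_V(a)$ respectively, as required.

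I do not anticipate a genuine obstacle; the result is a short computation once the module structure is fixed. The only point demanding care is the bookkeeping between the diagonal $\mathbb{C}[G]$-action (which makes $\lambda\otimes V$ a module) and the left-multiplication action of $x$, together with the passage to the central class sum, so that the rational factor $|G|/\dim V$ is tracked exactly rather than absorbed into an unspecified constant.
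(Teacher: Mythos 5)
Your proposal is correct and follows essentially the same route as the paper: invariance and the scalar action come from the preceding lemma together with the computation $x(e_\lambda\otimes v)=e_\lambda\otimes\bigl(\sum_{g}g^{-1}bg\bigr)v$, which is exactly the paper's opening step. The only (immaterial) difference is how the scalar is identified: the paper takes the trace of $x$ on $\lambda\otimes V$ to get $|G|\chi(b)=\rho\,\chi(1)$, whereas you pass through the central character of the class sum $K_b$; both give $\rho=|G|\chi(b)/\chi(1)$.
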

\begin{proof}
Of course if $1\otimes v\in\lambda\otimes V$ then $x(1\otimes v)=\sum_{g\in G}1\otimes g^{-1}bg(v)\in\lambda\otimes V$.
Moreover $\lambda\otimes V$ is simple as a $\mathbb{C}[G]$ module
hence by the above lemma $x$ will act as a scalar on the simple module
$\lambda\otimes V$. This means that $\lambda\otimes V$ is an eigenspace
for $x$. If we examine the trace of action of $x$ on this module
we note that $\mbox{tr}(x)=|G|\mbox{tr}(b)=|G|\chi(b)$ where $\chi$
is the character of $V$. Moreover since $x$ is a scalar, $\mbox{tr}(x)=\chi(1)\rho$
where $\rho$ is the eigenvalue of $x$ on $\lambda\otimes V$. It
follows that $\rho=|G|\frac{\chi(b)}{\chi(1)}$. \end{proof}
\begin{cor}
The set of character values of the elements $a$ and $b$ are included
in the field obtained by adjoining the eigenvalues of $x$ to $\mathbb{Q}$.
\end{cor}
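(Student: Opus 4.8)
The plan is to read the statement straight off the preceding corollary, since essentially all the work is already done there. That corollary shows that for every irreducible representation $V$ of $G$ with character $\chi$, the submodule $V\otimes\lambda$ is an eigenspace of $x$ with eigenvalue $|G|\,\chi(a)/\chi(1)$, and dually $\lambda\otimes V$ is an eigenspace with eigenvalue $|G|\,\chi(b)/\chi(1)$. By the very definition of the field $K:=\mathbb{Q}(\text{eigenvalues of }x)$, both of these eigenvalues lie in $K$.

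First I would fix a single irreducible $V$ and simply invert the rational prefactor. Since $\chi(1)=\dim V$ and $|G|$ are positive integers, they are nonzero elements of $\mathbb{Q}\subseteq K$, so the scalar $\chi(1)/|G|$ lies in $K$ and is invertible there. Writing $\chi(a)=\frac{\chi(1)}{|G|}\cdot\big(\text{eigenvalue on }V\otimes\lambda\big)$ and $\chi(b)=\frac{\chi(1)}{|G|}\cdot\big(\text{eigenvalue on }\lambda\otimes V\big)$ exhibits each character value as the product of a rational number with an element of $K$, hence as an element of $K$.

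Finally I would let $V$ range over a complete set of irreducible representations of $G$. Every value $\chi(a)$ and $\chi(b)$ in the two relevant columns of the character table arises in this way, so the entire set of character values of $a$ and $b$ is contained in $K$, which is exactly the assertion. I do not expect any genuine obstacle: the substantive computation is the eigenvalue formula $\rho=|G|\,\chi(b)/\chi(1)$ established in the previous corollary, and what remains is only the elementary observation that the rational factor $|G|/\chi(1)$ is invertible inside $K$. The single point worth stating with care is that one must quantify over \emph{all} irreducible $V$ in order to recover the full set of character values, rather than extracting a single value from one representation.
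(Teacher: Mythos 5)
Your proof is correct and is exactly the argument the paper intends: the paper states this corollary without proof as an immediate consequence of the preceding one, and your step of recovering $\chi(a)$ and $\chi(b)$ by multiplying the eigenvalues on $V\otimes\lambda$ and $\lambda\otimes V$ by the rational factor $\chi(1)/|G|$, for every irreducible $V$, is precisely what is implicit there.
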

Consider $K$ a splitting field of $G$ (that is a field such that
any $K[G]$ irreducible module is absolutely irreducible).
\begin{lem}
The field generated by the eigenvalues of $x$ is contained in the
splitting field of $G$. 
\end{lem}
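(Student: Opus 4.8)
The plan is to diagonalise the problem blockwise over the splitting field $K$ and to match each block eigenvalue with character data of $G$, which $K$ already contains.

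First I would replace $\mathbb{C}$ by $K$. As $\mathbb{Q}\subseteq K$ and $x\in\mathbb{Q}[G\times G]\subseteq K[G\times G]$, the eigenvalues of $x$ on $\mathbb{C}[G\times G]$ are the roots of the characteristic polynomial of left multiplication by $x$ on $K[G\times G]$; this polynomial has coefficients in $K$, so it suffices to prove that it splits over $K$. Since $K$ is a splitting field, Wedderburn gives $K[G\times G]\cong\prod_{\chi,\psi}\mbox{End}_{K}(V_{\chi}\otimes V_{\psi})$, the factor indexed by a pair $\chi,\psi$ of irreducible characters acting on $V_{\chi}\otimes V_{\psi}$. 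Left multiplication by $x$ preserves this product, and on the $(\chi,\psi)$ factor it is carried by the operator $T_{\chi,\psi}=\sum_{g\in G}\rho_{\chi}(g^{-1}ag)\otimes\rho_{\psi}(g^{-1}bg)$. Hence the eigenvalues of $x$ are exactly the eigenvalues of the finitely many operators $T_{\chi,\psi}$, and the task is to show that each of these lies in $K$.

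Next I would bring in the fact, already established, that $x$ commutes with the diagonal $\mathbb{C}[G]$. Writing $T_{\chi,\psi}=\sum_{g}(\rho_{\chi}\otimes\rho_{\psi})(g)^{-1}\bigl(\rho_{\chi}(a)\otimes\rho_{\psi}(b)\bigr)(\rho_{\chi}\otimes\rho_{\psi})(g)$ exhibits it as an endomorphism of $V_{\chi}\otimes V_{\psi}$ commuting with the diagonal $G$-action. Decomposing $V_{\chi}\otimes V_{\psi}=\bigoplus_{\theta}V_{\theta}\otimes M_{\theta}$ with $M_{\theta}=\mbox{Hom}_{G}(V_{\theta},V_{\chi}\otimes V_{\psi})$, Schur's lemma over the splitting field $K$ shows that $T_{\chi,\psi}$ acts as $1\otimes T_{\theta}$ for some $T_{\theta}\in\mbox{End}_{K}(M_{\theta})$, so the eigenvalues of $x$ are the eigenvalues of the multiplicity-space operators $T_{\theta}$. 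When the tensor product $V_{\chi}\otimes V_{\psi}$ is multiplicity free every $M_{\theta}$ is a line, $T_{\theta}$ is a scalar, and I can compute it exactly as in the earlier corollary: with $e_{\theta}$ the central idempotent of $V_{\theta}$ one has $T_{\theta}=\frac{1}{\dim V_{\theta}}\mbox{tr}(T_{\chi,\psi}e_{\theta})$, and expanding $e_{\theta}=\frac{\dim V_{\theta}}{|G|}\sum_{h}\overline{\chi_{\theta}(h)}\,(\rho_{\chi}\otimes\rho_{\psi})(h)$ turns this into a $\mathbb{Q}$-linear combination of products $\chi(a\,ghg^{-1})\psi(b\,ghg^{-1})$ of character values, all of which lie in $K$. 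Thus in the multiplicity-free case every eigenvalue is visibly in $K$.

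The main obstacle is the higher-multiplicity case $\dim_{K}M_{\theta}\ge 2$, where $T_{\theta}$ is a genuine matrix over $K$ and its eigenvalues a priori lie only in $\bar{K}$. The same trace computation applied to the powers $T_{\chi,\psi}^{n}e_{\theta}$ shows that all power sums, hence all symmetric functions, of the eigenvalues of $T_{\theta}$ are $\mathbb{Q}$-combinations of character values and so lie in $K$; this only reconfirms that the characteristic polynomial of $T_{\theta}$ is defined over $K$, and the real point, which I expect to be the crux, is to show that it actually splits there. To force this I would argue by Galois descent: every $\sigma\in\mbox{Gal}(\bar{K}/K)$ fixes the $\mathbb{Q}$-rational element $x$ and, because $K$ is a splitting field, fixes the isomorphism class of each irreducible $G$-module, so $\sigma$ permutes the eigenspaces of $x$ through $G$-module isomorphisms. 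Promoting this to the fixing of each individual eigenvalue is the delicate step; I expect it to require the special shape of $x$ --- that the conjugated factors are the images of the single elements $a$ and $b$ --- rather than any formal argument, and this is where the bulk of the work would lie.
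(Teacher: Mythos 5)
Your reduction is sound as far as it goes --- passing to the blocks $T_{\chi,\psi}=(\rho_{\chi}\otimes\rho_{\psi})(x)$, observing that each commutes with the diagonal $G$-action, splitting off the multiplicity spaces $M_{\theta}$, and computing the scalar in the multiplicity-free case are all correct --- but the proof is not finished, and you say so yourself. In the case $\dim_{K}M_{\theta}\ge2$ you establish only that the characteristic polynomial of $T_{\theta}$ has coefficients in $K$, which is automatic since $T_{\theta}$ is literally a matrix over $K$; the statement to be proved is precisely that this polynomial \emph{splits} over $K$. Your closing Galois-descent sketch does not close this: knowing that each $\sigma\in\mbox{Gal}(\bar{K}/K)$ fixes $x$ and permutes eigenspaces through $G$-module isomorphisms is the same information as ``the characteristic polynomial is $\sigma$-stable,'' so it cannot by itself pin down individual eigenvalues. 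As written, the proposal proves the lemma only for dessins all of whose relevant tensor products $V_{\chi}\otimes V_{\psi}$ are multiplicity free.

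The paper closes this gap by a different manoeuvre, and it is worth seeing where the two arguments diverge. Instead of trying to show that the whole matrix $T_{\theta}\in\mbox{End}_{K}(M_{\theta})$ has eigenvalues in $K$, the paper starts from a single eigenspace $\bar{W}$ of $x$ inside $\mathbb{C}[G\times G]$ (already known, from the earlier corollary, to be a module for the diagonal $\mathbb{C}[G]$), chooses an irreducible diagonal submodule $\bar{V}\le\bar{W}$ --- in your coordinates, $V_{\theta}\otimes L$ with $L$ an eigenline of $T_{\theta}$ --- and then descends $\bar{V}$ to a $K$-form $V$ inside $K[G\times G]$ that is still $x$-invariant; since $K$ splits $G$, $\mbox{End}_{K[G]}(V)=K$, and the scalar by which $x$ acts, i.e.\ the chosen eigenvalue, lies in $K$. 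The entire content of the higher-multiplicity case is thus concentrated in the claim that this specific eigenline (equivalently, the irreducible submodule $\bar{V}$ of the eigenspace) admits an $x$-stable $K$-form, which the paper extracts from the descent of irreducibles over a splitting field. So your instinct that ``the special shape of $x$'' must enter is not how the paper proceeds: what is missing from your argument is not a further computation with $a$ and $b$ but a rationality statement about the eigenspace itself, and that is the statement you would have to supply to complete your version.
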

We will consider the natural inclusion $\mathbb{Q\mathbb{}}[G\times G]\subseteq K[G\times G]$
and view $x$ as an element of $K[G\times G]$. At the same time for
the purpose of finding the eigenvalues we also consider the natural
embedding $K[G\times G]\subseteq\mathbb{C}[G\times G]$.
\begin{proof}
Note that a $K[G]$-module $V$ is irreducible if and only if $\mbox{End}_{K[G]}(V)=K$
(see 29.13 of \cite{CR88}). Therefore if $V$ is an irreducible $K[G]$-module
that is invariant under $x$ then $x$ acts on it as a scalar in $K$.

Conversely consider $\bar{W}$ an eigenspace of $x$ (this is of course
a subspace of $\mathbb{C}[G\times G]$). There exists a subspace $\bar{V}\le\bar{W}$
that is an irreducible $\mathbb{C}[G]$-module. Since $K$ is the
splitting field of $G$ there exists an irreducible $K[G]$-module
$V$ such that $\bar{V}=V\otimes_{K}\mathbb{\mathbb{C}}$ (for example
see problem 28.9 in \cite{CR88}). Since $x$ acts as a scalar on
$\bar{V}$ it will fix $V$. Moreover the action of $x$ on $V$ depends
exclusively on the action of $G$ on $V$ so it will be $K[G]$-linear.
In particular $x$ acts on $V$ as an element of $\mbox{End}_{K[G]}(V)=K$
and so the eigenvalues of $x$ are in $K$.
\end{proof}
Combining the results we obtain the following: 
\begin{thm}
Let $L$ be the field generated over $\mathbb{Q}$ by all the eigenvalues
of $x$. Then $k\le L\le K$, where $k$ is the field generated by
the character values of $a$ and $b$ and $K$ is the field generated
by the $|G|$-roots of one. \end{thm}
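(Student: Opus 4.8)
The plan is to prove the two inclusions $k \le L$ and $L \le K$ separately, assembling the corollaries and lemmas already established rather than arguing from scratch. For the lower bound $k \le L$, I would simply invoke the earlier corollary stating that the character values of $a$ and $b$ lie in the field obtained by adjoining the eigenvalues of $x$ to $\mathbb{Q}$. Since $k$ is by definition generated over $\mathbb{Q}$ by precisely these character values, and $L$ is by definition generated over $\mathbb{Q}$ by the eigenvalues of $x$, that corollary is exactly the statement $k \subseteq L$, so nothing further is needed here.

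For the upper bound $L \le K$, I would appeal to the last lemma, which asserts that the eigenvalues of $x$ are contained in any splitting field of $G$ (its proof fixes an abstract splitting field $K$ and shows $x$ acts on each relevant irreducible $K[G]$-module as a scalar in $K$). The one point that must be supplied is the identification of a concrete splitting field inside the cyclotomic field $\mathbb{Q}(\zeta_{|G|})$ named in the theorem. For this I would invoke the classical theorem of Brauer that the cyclotomic field $\mathbb{Q}(\zeta_m)$, where $m = \exp(G)$ is the exponent of $G$, is already a splitting field for $G$. Because $\exp(G)$ divides $|G|$, we have the tower of inclusions $\mathbb{Q}(\zeta_{\exp(G)}) \subseteq \mathbb{Q}(\zeta_{|G|}) = K$, and a field containing a splitting field is itself a splitting field; hence $K$ is a splitting field of $G$. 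Applying the last lemma with this specific choice then gives $L \subseteq K$.

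The main, and indeed essentially the only, substantive step is the appeal to Brauer's splitting-field theorem, which is what pins the abstract splitting field of the previous lemma down to the concrete cyclotomic field of $|G|$-th roots of unity; I would cite a standard reference such as \cite{CR88} for it. Everything else is bookkeeping over the previously proved statements, and combining the two inclusions yields $k \le L \le K$ as claimed.
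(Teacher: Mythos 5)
Your proposal is correct and takes essentially the same route as the paper, which proves the theorem simply by ``combining the results'': the earlier corollary gives $k\le L$ and the splitting-field lemma gives $L\le K$. The only step the paper leaves implicit is the identification of $\mathbb{Q}(\zeta_{|G|})$ as a splitting field via Brauer's theorem, which you correctly and explicitly supply.
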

\begin{cor}
\label{cor:Parker's-conjecture}Parker's conjecture can only hold
for dessins with abelian fields of moduli. \end{cor}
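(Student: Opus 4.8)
The plan is to read the statement directly off the preceding theorem together with one classical fact from Galois theory. By that theorem the field $L$ generated over $\mathbb{Q}$ by the eigenvalues of $x$ satisfies $L\le K$, where $K=\mathbb{Q}(\zeta_{|G|})$ is the field obtained by adjoining the $|G|$-th roots of unity. The decisive observation is that $K/\mathbb{Q}$ is an abelian extension: its Galois group is isomorphic to $(\mathbb{Z}/|G|\mathbb{Z})^{\times}$, which is abelian. This is the only structural input needed, and it is entirely classical.

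First I would invoke the fundamental theorem of Galois theory to conclude that every intermediate field of the abelian extension $K/\mathbb{Q}$ is itself Galois over $\mathbb{Q}$ with abelian Galois group. Since $\mathrm{Gal}(K/\mathbb{Q})$ is abelian, every subgroup is normal, so every fixed field $L$ with $\mathbb{Q}\le L\le K$ is normal over $\mathbb{Q}$, and $\mathrm{Gal}(L/\mathbb{Q})$ is a quotient of the abelian group $\mathrm{Gal}(K/\mathbb{Q})$, hence abelian. Applying this to the particular $L$ of the theorem gives that $L/\mathbb{Q}$ is abelian.

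To finish, I would unwind the statement of Parker's conjecture. The conjecture asserts precisely that the field of moduli of the dessin $D$ coincides with the field generated over $\mathbb{Q}$ by the eigenvalues of $x$, that is, with $L$. Hence, whenever the conjecture holds for $D$, the field of moduli equals $L$, which by the previous paragraph is abelian over $\mathbb{Q}$. Taking the contrapositive, if the field of moduli of $D$ fails to be abelian then Parker's conjecture cannot hold for $D$; equivalently, the conjecture can only hold for dessins whose field of moduli is abelian.

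I do not expect a genuine obstacle here: once the theorem is in hand, the entire argument rests on the cyclotomic nature of $K$ and the stability of abelianness under passage to subfields. The only point requiring a moment's care is to confirm that the object Parker's conjecture predicts is exactly the field $L$ of the theorem, namely the subfield of $\mathbb{C}$ generated by the eigenvalues of $x$, so that the inclusion $L\le K$ can be transported into a statement about the field of moduli itself.
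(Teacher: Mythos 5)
Your proof is correct and matches the paper's (implicit) argument exactly: the Theorem gives $L\le K=\mathbb{Q}(\zeta_{|G|})$, every subfield of a cyclotomic extension is abelian over $\mathbb{Q}$, and Parker's conjecture identifies the field of moduli with $L$. The paper leaves this deduction unstated, so your write-up simply makes explicit the step the author regarded as immediate.
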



\begin{thebibliography}{8}
\bibitem{B79} G.~V. Bely{\u{\i}}. \newblock Galois extensions
of a maximal cyclotomic field. \newblock {\em Izv. Akad. Nauk SSSR
Ser. Mat.}, 43(2):267--276, 479, 1979.

\bibitem{CG94} J.-M. Couveignes and L.~Granboulan. \newblock Dessins
from a geometric point of view. \newblock In {\em The Grothendieck
theory of dessins d'enfants (Luminy, 1993)}, volume 200 of {\em
London Math. Soc. Lecture Note Ser.}, pages 79--113. Cambridge Univ.
Press, Cambridge, 1994.

\bibitem{CR88} C.~W. Curtis and I.~Reiner. \newblock {\em Representation
theory of finite groups and associative algebras}. \newblock Wiley
Classics Library. John Wiley \& Sons Inc., New York, 1988.

\bibitem{G97} A.~Grothendieck. \newblock Esquisse d'un programme.
\newblock In {\em Geometric Galois actions, 1}, volume 242 of
{\em London Math. Soc. Lecture Note Ser.}, pages 5--48. Cambridge
Univ. Press, Cambridge, 1997.

\bibitem{LocSch06}P.\textasciitilde{}Lochak and L.\textasciitilde{}Schneps.
\newblock Open problems in {G}rothendieck-{T}eichm\textbackslash{}\textquotedbl{}uller
theory. \newblock In {\em Problems on mapping class groups and
related topics}, volume\textasciitilde{}74 of {\em Proc. Sympos.
Pure Math.}, pages 165--186. Amer. Math. Soc., Providence, RI, 2006.

\bibitem{R04} O.~Röndigs. \newblock Theory of motives, homotopy
theory of varieties, and dessins d'enfants. \newblock workshop in
Palo Alto, 04 2004.

\bibitem{S94} L.~Schneps. \newblock Dessins d'enfants on the {R}iemann
sphere. \newblock In {\em The Grothendieck theory of dessins d'enfants
(Luminy, 1993)}, volume 200 of {\em London Math. Soc. Lecture Note
Ser.}, pages 47--77. Cambridge Univ. Press, Cambridge, 1994.

\bibitem{S07}L.~Schneps. \newblock Some notes on Parker\textquoteright{}s
conjecture, 2007 http://people.math.jussieu.fr/\textasciitilde{}leila/articles.html
\end{thebibliography}
\end{document}